\title{Incidences of M\"{o}bius transformations in $\mathbb F_p$}
\newcommand{\F}{\mathbb{F}}
\newcommand{\Pb}{\mathbb{P}}
\newtheorem{theorem}{Theorem}
\newtheorem{corollary}{Corollary}
\author{Audie Warren and James Wheeler}
\begin{document}

 \maketitle
 
 \begin{abstract}
We develop the methods used by Rudnev and Wheeler \cite{MishaJames} to prove an incidence theorem between arbitrary sets of M\"{o}bius transformations and point sets in $\mathbb F_p^2$. We also note some asymmetric incidence results, and give applications of these results to various problems in additive combinatorics and discrete geometry.
\end{abstract}

 \section{Introduction} \label{introduction}
In this paper, we study incidence problems concerning M\"{o}bius transformations (which we often simply call \emph{transformations} for brevity) over the prime fields $\mathbb F_p$. Such problems have been studied over the complex numbers, see \cite{solymositardos}. M\"{o}bius transformations over $\F_p$ are bijections $f: \Pb(\F_p) \rightarrow \Pb(\F_p)$ of the form
 $$f(x) = \frac{ax + b}{cx + d}, \quad ad - bc \neq 0$$
 with $a,b,c,d \in \mathbb F_p$, together with the two further points $f([\frac{-d}{c} ; 1]) = [1;0]$ and $f([1;0]) = [\frac{a}{c} ; 1]$. It is convenient to write such transformations as a matrix, via the map
 $$\frac{ax + b}{cx + d} \rightarrow \begin{pmatrix}a & b \\ c & d \end{pmatrix},$$
  where we see that the matrix should be considered \emph{projectively}, since scalar multiples of a matrix define the same transformation. Given a M\"{o}bius transformation $f$, let $M_f$ denote the corresponding matrix. The composition of two M\"{o}bius transformations $f$ and $g$ corresponds to multiplication of the matrices $M_f$ and $M_g$, that is,
 $$M_{f(g(x))} = M_{f}M_{g}.$$

 Given $P \subseteq \mathbb F_p^2$ a set of points, and $C$ a set of curves, we define
$$I(P,C) := \left| \left\{ (p,c) \in P \times C : p \text{ lies on } c \right\} \right|,$$
where each constituent pair $(p,c)$ is called an \emph{incidence}, and $I(P,C)$ is called the number of \emph{incidences} between $P$ and $C$. An \emph{incidence theorem} gives an upper bound for $I(P,C)$. The most common case is where $C$ is a set of lines. In this case, over the real numbers we have the Szemer\'{e}di - Trotter theorem, which states the following\footnote{We use the standard Vinogradov notation $X \ll Y$ to mean that there exists an absolute constant $C$ with $X \leq CY$. We have $Y \gg X$ iff $X \ll Y$. We write $X\sim Y$ to mean that $X \ll Y$ and $Y \ll X$. }.
\begin{theorem}[Szemer\'{e}di - Trotter]For any finite set of points $P$ and lines $L$ in the real plane, we have that
$$I(P,L) \ll |P|^{2/3}|L|^{2/3} + |P| + |L|.$$
\end{theorem}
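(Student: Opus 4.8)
The plan is to follow Székely's crossing-number argument, which gives the cleanest route to this bound. The single external ingredient I would invoke is the \emph{crossing number inequality}: any simple graph $G$ drawn in the plane with $n$ vertices and $e \geq 4n$ edges has at least $\gg e^3/n^2$ crossing pairs of edges. This itself follows from Euler's formula, which yields the linear estimate $\mathrm{cr}(G) \geq e - 3n$ for the minimum number of crossings, amplified by a standard probabilistic argument: one passes to a random induced subgraph by retaining each vertex independently with probability $q$, applies the linear estimate there, takes expectations, and optimises over $q$.

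Granting this, I would build a graph $G$ from the configuration as follows. First discard every line meeting $P$ in no point, and let $L'$ be the set of remaining lines, so that $|L'| \leq |L|$ and $I(P, L') = I(P, L)$. Take the vertices of $G$ to be $P$, and for each $\ell \in L'$ meeting $P$ in $k_\ell \geq 1$ points, order these points along $\ell$ and join consecutive ones; this contributes $k_\ell - 1$ edges. Then $n = |P|$ and $e = \sum_{\ell \in L'} (k_\ell - 1) = I(P,L) - |L'|$. Since two distinct lines meet in at most one point, any two edges arising from different lines cross at most once and edges from a common line never cross, whence $\mathrm{cr}(G) \leq \binom{|L'|}{2} \leq |L|^2$.

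The conclusion now splits into two regimes. If $e < 4n$ then $I(P,L) = e + |L'| < 4|P| + |L|$, which is absorbed by the linear terms. Otherwise $e \geq 4|P|$, and the crossing number inequality combined with the geometric bound gives $|L|^2 \geq \mathrm{cr}(G) \gg e^3/|P|^2$, so that $e \ll |P|^{2/3}|L|^{2/3}$ and hence $I(P,L) = e + |L'| \ll |P|^{2/3}|L|^{2/3} + |L|$. Taking the maximum of the two cases yields the claimed inequality.

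The substantive step is the crossing number inequality itself; the rest is bookkeeping. I expect the main difficulty to lie there, specifically in the probabilistic amplification from the linear Euler bound to the cubic one, and in the need to peel off the degenerate range $e < 4n$ by hand, since the inequality genuinely fails for sparse graphs.
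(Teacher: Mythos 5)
Your proposal is correct, but there is nothing in the paper to compare it against: the paper only quotes the Szemer\'{e}di--Trotter theorem as classical background (it works over $\F_p$, where this statement is false and is replaced by the Stevens--de Zeeuw bound), and gives no proof of it. What you have written is the standard Sz\'{e}kely crossing-number argument, and it is sound: the graph you build is simple (two points determine a unique line, so no edge arises from two different lines), the straight-line drawing is proper (no edge passes through a vertex, since a point of $P$ interior to a segment of $\ell$ would lie on $\ell$ between two consecutive points), the crossing count $\mathrm{cr}(G) \leq \binom{|L'|}{2}$ is valid because any two segments on distinct lines meet at most once and segments on a common line never cross, and the two regimes $e < 4|P|$ and $e \geq 4|P|$ together give exactly the claimed bound. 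The one external ingredient, the crossing number inequality $\mathrm{cr}(G) \gg e^3/n^2$ for $e \geq 4n$, is correctly stated and your sketch of its proof (Euler's linear bound plus random vertex sampling) is the standard one; if you were writing this in full, that amplification step is indeed where the real work lies, as you say.
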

This theorem has found widespread applications in both additive combinatorics and discrete geometry. Many incidence theorems for sets of curves other than lines have also been proven, see for example \cite{pachsharir}.

Over finite fields, much less is known. Stevens and de Zeeuw \cite{SophieFrank} proved an incidence theorem for points and lines in arbitrary fields, which has found many applications. Incidence theorems for curves other than lines have been infamously elusive in this setting. The first result in this direction was proved by Bourgain \cite{Bourgain}, who gave a qualitative result concerning incidences between Cartesian products and modular hyperbolas. A second incidence theorem concerning modular hyperbolas was proved by Shkredov \cite{shkredovhyperbolas}. The work \cite{MishaJames} is the third instance of a non-linear incidence theorem in $\F_p$. Specifically, the authors in \cite{MishaJames} proved an incidence bound between Cartesian product point sets, and a set of curves given by translates of the hyperbola $xy=-1$.

In this paper, we build upon the ideas present in \cite{MishaJames} to prove incidence bounds between arbitrary sets of M\"{o}bius transformations and point sets in $\F_p^2$. We then exploit these theorems to give applications to problems in additive combinatorics and discrete geometry.

\subsection{Incidence Results}
Given a set of points $P$ and (M\"{o}bius) transformations $T$, we say that a transformation $f$ is $k$-rich (with respect to $P$) if $|f \cap P| \geq k$. Our main theorem is the following.

\begin{theorem} \label{main}
For any set $T$ of M\"{o}bius transformations, and any set of points $P \subseteq \mathbb F_p^2$ with $|P| \leq p^{15/13}$, we have
$$I(P,T) \ll |P|^{15/19}|T|^{15/19} + |P|^{23/19}|T|^{4/19} + |T|.$$
Furthermore, given any set $P$ of points with $|P| \leq p^{15/26}$ and some integer $k \geq 3$, the set $T_k$ of $k$-rich transformations satisfies
$$|T_k| \ll \frac{|P|^{15/4}}{k^{19/4}} + \frac{|P|^2}{k^2}.$$
\end{theorem}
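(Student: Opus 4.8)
The plan is to deduce the bound on $|T_k|$ from the incidence estimate in the first part of Theorem \ref{main} by the usual dualisation, supplemented by a structural input that produces the second term. First I would record the trivial lower bound $I(P,T_k) \geq k|T_k|$, which holds because every transformation in $T_k$ is incident to at least $k$ points of $P$. Since $|P| \le p^{15/26} \le p^{15/13}$, the incidence bound applies with $T = T_k$, giving
$$k|T_k| \ll |P|^{15/19}|T_k|^{15/19} + |P|^{23/19}|T_k|^{4/19} + |T_k|.$$
For $k \ge 3$ the final term may be absorbed into the left-hand side, and comparing the two remaining terms yields $|T_k| \ll |P|^{15/4}/k^{19/4}$ in the range where the first term dominates (equivalently $k \lesssim |P|^{7/11}$). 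This recovers the first term of the claimed bound.

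The second term rests on the structural fact that two distinct M\"obius transformations agree on at most two points: if $f \neq g$ then $g^{-1} \circ f$ is a non-identity transformation and hence has at most two fixed points in $\Pb(\F_p)$, so $f$ and $g$ share at most two common incidences with $P$. Consequently the bipartite incidence graph between $P$ and $T_k$ contains no copy of $K_{2,3}$ having its two-vertex side among $T_k$, and a K\H{o}v\'ari--S\'os--Tur\'an / Cauchy--Schwarz argument (bounding $\sum_{p}\binom{d(p)}{2}$, where $d(p)$ counts transformations of $T_k$ through $p$, by $2\binom{|T_k|}{2}$, and using $\sum_p d(p) = I(P,T_k)$) gives $I(P,T_k) \ll |P|^{1/2}|T_k| + |P|$. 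Combined again with $I(P,T_k) \ge k|T_k|$, this forces $|T_k| \ll |P|/k$ once $k \gg |P|^{1/2}$, and in particular $|T_k| \ll |P|^2/k^2$ throughout the complementary range $k \gtrsim |P|^{7/11}$ where the first term no longer controls $|T_k|$.

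I would then combine the two estimates by taking, for each $k$, whichever bound is smaller: the incidence-theoretic bound for $k \lesssim |P|^{7/11}$ and the structural bound for $k \gtrsim |P|^{7/11}$. Checking that the crossover of $|P|^{15/4}/k^{19/4}$ and $|P|^2/k^2$ occurs precisely at $k \sim |P|^{7/11}$ shows that this minimum is always $\ll |P|^{15/4}/k^{19/4} + |P|^2/k^2$, which is the asserted bound.

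The main obstacle I anticipate is pinning down the second term together with its exact hypothesis. The elementary K\H{o}v\'ari--S\'os--Tur\'an argument above needs no constraint on $p$ and in fact delivers the stronger estimate $|P|/k$, so the appearance of the sharper assumption $|P| \le p^{15/26} = (p^{15/13})^{1/2}$ strongly suggests that the intended derivation of the second term instead applies the incidence bound of Theorem \ref{main} to an auxiliary planar configuration of size $\sim |P|^2$, for which the threshold $p^{15/13}$ sharpens to $p^{15/26}$. Identifying this configuration, verifying the exponents it produces, and reconciling it with the cleaner graph-theoretic bound — in particular confirming the size estimate $\lesssim |P|^2$ that is needed before the incidence theorem can be invoked in the relevant regime — is the step I expect to demand the most care.
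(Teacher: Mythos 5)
Your proposal has a genuine gap: it proves neither part of the theorem unconditionally, because it derives the second part \emph{from} the first while offering no proof of the first. The incidence bound $I(P,T) \ll |P|^{15/19}|T|^{15/19} + |P|^{23/19}|T|^{4/19} + |T|$ is the core content of the theorem, and the paper's logic runs in exactly the opposite direction to yours: the rich-transformation bound is proved first, directly, and the incidence bound is then deduced from it by a dyadic decomposition of $I(P,T) = \sum_k k|T_{=k}|$ with an optimised threshold $\Delta \sim |P|^{15/19}/|T|^{4/19}$. The direct proof of the rich bound is the key idea your proposal is missing: for each fixed $q = (q_1,q_2) \in P$, one conjugates by the M\"obius maps $g_1(x) = 1/(q_2 - x)$ and $g_2(x) = q_1 - 1/x$, so that $\phi_q(f) = g_1 \circ f \circ g_2$ sends every transformation through $q$ injectively to a \emph{line}; simultaneously the point set is transformed by $\psi(s_1,s_2) = \bigl(1/(q_1 - s_1),\, 1/(q_2 - s_2)\bigr)$, which preserves incidences (losing only the incidence at $q$ itself). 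The Stevens--de Zeeuw bound on rich lines (Corollary \ref{k-rich}, which is where the hypothesis $|P| \le p^{15/13}$ enters) then bounds $|T_{q,k}|$, and summing over $q \in P$ --- each $k$-rich transformation being counted at least $k$ times --- gives $|T_k| \ll |P|^{15/4}/k^{19/4} + |P|^2/k^2$. Nothing in your argument supplies an independent route to either estimate; indeed, the fact that your argument has no mechanism by which any condition relating $|P|$ to $p$ could arise is a symptom of the missing reduction to point-line incidences, which is the only place such conditions come from.

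What you do write is essentially correct as a conditional statement, and it is a genuinely different (if here circular) route to the second part. The deduction $k|T_k| \le I(P,T_k)$, the case analysis giving $|T_k| \ll |P|^{15/4}/k^{19/4}$ precisely when $k \lesssim |P|^{7/11}$, and the structural observation that two distinct M\"obius transformations agree in at most two points (so that Cauchy--Schwarz gives $I(P,T_k) \ll |P|^{1/2}|T_k| + |P|$, hence $|T_k| \ll |P|/k \le |P|^2/k^2$ once $k \gg |P|^{1/2}$) do correctly show that the first part of the theorem implies the second; the exponent check at the crossover $k \sim |P|^{7/11}$ is also right. Had the paper proved the incidence bound independently, your argument would be a clean way to read off the rich bound. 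But as the paper stands, the implication is used in the other direction, so your proposal assumes what must be proved. Finally, your closing speculation about the hypothesis $|P| \le p^{15/26}$ does not match the paper: its proof of the rich bound only ever applies Stevens--de Zeeuw to the images $\psi(P)$, each of size at most $|P|$, so no auxiliary configuration of size $\sim |P|^2$ appears anywhere in the argument.
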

The second term in the incidence bound above should be seen as an error term accounting for when the point set is proportionally very large. Usually we would expect this error term to be $|P|$, however as an artefact of the method used we recover a larger error term. Note that in contrast to many other incidence theorems, in the balanced case $|P| =|T| = N$, the main term has an exponent of $30/19 > 3/2$. This is in contrast to, say, line incidences, which would correspond to a trivial (via Cauchy-Schwarz) upper bound of $N^{3/2}$. The difference is that M\"{o}bius transformations require \emph{three} points to be uniquely defined. This implies that the trivial bound implied by H\"{o}lder's inequality is $N^{5/3}$, showing that Theorem \ref{main} is non-trivial in the balanced case.

We also note that the same proof, applied with an alternative incidence theorem of Stevens and de Zeeuw, gives the following result concerning asymmetric Cartesian products. The symmetric form where $A=B$ is essentially Theorem 3.2 in \cite{MishaJames}. 

\begin{theorem}\label{asymmetric}
Let $A \times B$ be a set of points in $\mathbb F_p^2$, and let $T$ be any set of M\"{o}bius transformations. Then if $|A||T| \ll p^2$, we have
$$I(A \times B, T) \ll |A|^{4/5}|B|^{3/5}|T|^{4/5} + |A|^{6/5}|B|^{7/5} |T|^{1/5} + |T|$$
Furthermore, given any set $A \times B$ of points with $|A|^3|B|^2 \leq p^{2}$ and some integer $k \geq 3$, the set $T_k$ of $k$-rich transformations satisfies
$$|T_k| \ll \frac{|A|^4|B|^3}{k^{5}} + \frac{|A|^2|B|^2}{k^2} .$$
\end{theorem}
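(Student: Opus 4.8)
The plan is to run the proof of Theorem~\ref{main} essentially unchanged, replacing the general incidence bound that it invokes by the Stevens--de Zeeuw bound for Cartesian products. The starting point is the matrix correspondence from the introduction: a point $(x,y)\in\F_p^2$ lies on the transformation $f$ with entries $a,b,c,d$ exactly when $ax+b-cxy-dy=0$. After normalising $c=1$ (the affine family $c=0$, i.e.\ the maps $x\mapsto\alpha x+\beta$, has graphs that are honest lines and so can be bounded directly by Stevens--de Zeeuw, and I would dispatch it separately), this is a \emph{linear} condition in $(a,b,d)$. Hence each transformation is a point of $\F_p^3$, each point of $P$ is a plane, and $I(P,T)$ is an incidence count between $|T|$ points and $|P|$ planes of the special shape coming from the Segre quadric. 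The reduction inside Theorem~\ref{main} turns this point--plane count into a planar point--line count; the single new observation I need is that when $P=A\times B$ the point set produced by that reduction is again a Cartesian product, with its two sides governed by $|A|$ and $|B|$, while the lines remain in bijection with $T$.

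With the reduction set up, I would substitute the Stevens--de Zeeuw estimate $I(U\times V,L)\ll |U|^{3/4}|V|^{1/2}|L|^{3/4}+|U||V|+|L|$, valid in the range $|U||L|\ll p^2$, in place of the general-position bound used for Theorem~\ref{main}. Because a M\"obius transformation is pinned down only by three points, this cannot be fed into $I(P,T)$ directly; it enters after a Cauchy--Schwarz step that passes from the raw incidence count to a count of co-transformational pairs, equivalently to incidences between $P$ and the pencils determined by pairs of its points. Propagating the Stevens--de Zeeuw exponent $\tfrac34$ on $|L|$ through this Cauchy--Schwarz and re-optimising is what is responsible for the exponent $\tfrac45$ on $|T|$ in the statement, with the two point-set sides assembling into $|A|^{4/5}|B|^{3/5}$; the second summand $|A|^{6/5}|B|^{7/5}|T|^{1/5}$ and the term $|T|$ come from the lower-order terms of Stevens--de Zeeuw and from the degenerate pieces. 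The hypothesis $|A||T|\ll p^2$ is exactly the Stevens--de Zeeuw range $|U||L|\ll p^2$ once the sides of the reduced product are identified with $A$ and $T$, which is why it appears in this form.

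For the $k$-rich assertion I would feed $T_k$ into the incidence bound just obtained. From $I(P,T_k)\ge k|T_k|$ together with $I(A\times B,T_k)\ll |A|^{4/5}|B|^{3/5}|T_k|^{4/5}+\cdots$, solving for $|T_k|$ gives the main term $|A|^4|B|^3/k^5$ (the first term of the incidence bound being the relevant one once $k\ge 3$), while the term $|A|^2|B|^2/k^2$ is supplied by the elementary Cauchy--Schwarz estimate that uses only that two distinct transformations meet in at most two points; one then checks that the contribution of the error term of the incidence bound is never larger than the sum of these two. The constraint $|A|^3|B|^2\le p^2$ is the form taken by $|A||T_k|\ll p^2$ at the critical richness, so it must be verified to hold across the range of $k$ that is used.

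I expect the genuine difficulty to be the reduction rather than the optimisation: one must check that the planar point set output by the Theorem~\ref{main} machinery really does inherit a Cartesian structure with sides of the stated sizes when $P=A\times B$, and that all degeneracies---transformations with $c=0$, the coincidences among coordinates forced by the product structure, the two exceptional points at infinity, and the concurrent and collinear families that Stevens--de Zeeuw absorbs into its error terms---are controlled exactly as in \cite{MishaJames}. Since the symmetric case $A=B$ is Theorem~3.2 of \cite{MishaJames}, I would use that computation as a template and simply carry the two sides $|A|$ and $|B|$ separately through the exponent bookkeeping.
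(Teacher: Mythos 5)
Your top-level plan --- rerun the proof of Theorem~\ref{main} with the Cartesian-product form of the Stevens--de Zeeuw bound --- is exactly what the paper intends, and you correctly isolate the one new observation that makes it work: the point map $\psi(s_1,s_2)=\bigl(1/(q_1-s_1),\,1/(q_2-s_2)\bigr)$ sends $A\times B$ to a Cartesian product $A'\times B'$ with $|A'|\le|A|$, $|B'|\le|B|$, while the transformations through $q$ become lines. But your description of how the bounds are then assembled does not match the machinery you claim to be reusing. The proof of Theorem~\ref{main} contains no Cauchy--Schwarz step passing to ``co-transformational pairs'' and no pencils through \emph{pairs} of points: it fixes a single point $q$, applies the $k$-rich \emph{line} bound to $\phi_q(T_{q,k})$ against $A'\times B'$, sums over the $|A||B|$ choices of $q$ (each $k$-rich transformation being counted at least $k$ times), and only then deduces the incidence bound by dyadic summation over $k$. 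In particular the $k$-rich statement is proved \emph{first} and the incidence statement follows from it --- the opposite of your order.

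This reversal is where the genuine gap lies. You derive the $k$-rich bound by feeding $T_k$ into the incidence bound, which requires $|A||T_k|\ll p^2$; this cannot be bootstrapped from $|A|^3|B|^2\le p^2$. A priori you only have the triple-counting bound $|T_k|\ll |A|^3|B|^3/k^3$ (three points determine at most one transformation), and $|A|\cdot|A|^3|B|^3/k^3\ll p^2$ fails for small $k$. Applying the incidence bound instead to a subset $T'\subseteq T_k$ of the largest admissible size $\sim p^2/|A|\ge |A|^2|B|^2$ only shows that $|T_k|>p^2/|A|$ forces $k\ll(|A|^2|B|)^{1/5}$; since triple counting settles the claim only for $k\le|A|^{1/2}$, the whole range $|A|^{1/2}\ll k\ll (|A|^2|B|)^{1/5}$ (nonempty whenever $|B|\gg|A|^{1/2}$, e.g.\ $|A|=|B|$) is left unproved, and there the claimed bound is strictly stronger than trivial. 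Moreover your stated source for the term $|A|^2|B|^2/k^2$ --- Cauchy--Schwarz using that two transformations meet in at most two points --- is vacuous: that argument yields information only when $k\gg(|A||B|)^{1/2}$, whereas always $k\le\min(|A|,|B|)\le(|A||B|)^{1/2}$. In the paper's route this term has a different origin: it is the Stevens--de Zeeuw term $|A'||B'|\le|A||B|$, which gives $|A||B|/(k-1)$ at the level of $(k-1)$-rich lines and becomes $|A|^2|B|^2/k^2$ after summing over $q$ and dividing by $k$. The paper's order also explains why the $p$-constraints close up: every application of Stevens--de Zeeuw is to a family of \emph{lines}, and rich lines satisfy the unconditional pair-counting bound $|L_k|\ll|A|^2|B|^2/k^2$, so $|A||L_k|\ll|A|^3|B|^2/k^2\le p^2$ holds automatically. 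The fix is structural rather than computational: prove the $k$-rich bound first, exactly as in Theorem~\ref{main} with your (correct) product-preservation observation, and then obtain the incidence bound from it by the dyadic decomposition.
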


Note that this theorem is non-trivial in the range $|T| \geq \frac{|A|^4|B|^3}{\max \{|A|,|B|\}^5}$, and is precisely trivial in the balanced case $|A| = |B| = N$, $|T| = N^2$, since any transformation can only have at most $\max\{|A|,|B|\}$ incidences.

Using an argument originating in a paper of Rudnev and Shkredov \cite{rudnevshkredov} and Theorem \ref{asymmetric} as a vehicle, the work of Rudnev and Wheeler can be adapted to prove the following result, which can be seen as an analogue of \cite[Theorem 8]{rudnevshkredov}. 

\begin{theorem}\label{energyversion}
Let $A \times B$ be a set of points in $\F_p^2$ with $|B| \leq p^{1/2}$, and $T$ be a set of M\"{o}bius transformations. Then we have
$$I(A \times B, T) \ll |A|^{1/2}|B|^{7/10}|T|^{3/5}E(T)^{1/10} + |B|^{1/2}|T|$$
where
$$E(T) := |\{ (f_1,f_2,f_3,f_4) \in T^4 : f_1 f_2^{-1} = f_3 f_4^{-1}\}|.$$
\end{theorem}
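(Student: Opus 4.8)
The plan is to adapt the Rudnev--Shkredov energy-incidence scheme, feeding the relevant sub-configurations into Theorem~\ref{asymmetric} as a ``vehicle''. There are three ingredients: a Cauchy--Schwarz step fibring the incidences over $A$; a reinterpretation of the resulting quantity through the \emph{quotient} transformations $f_1f_2^{-1}$, which is where $E(T)$ enters; and a dyadic decomposition balancing the multiplicity of these quotients against the vehicle.

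First I would peel off transformations of low richness. Writing an incidence as a triple $(x,y,f)$ with $x\in A$, $y\in B$ and $f(x)=y$, any $f$ with $|f\cap(A\times B)|\le |B|^{1/2}$ contributes at most $|B|^{1/2}|T|$ incidences in total, which is exactly the claimed error term; so it suffices to bound the incidences with the remaining, richer transformations, which form a subset of $T$ (and hence only decrease $|T|$ and $E(T)$). For these, set $d_x:=|\{f: f(x)\in B\}|$, so that $I(A\times B,T)=\sum_{x\in A}d_x$, and apply Cauchy--Schwarz to obtain $I(A\times B,T)^2\le |A|\,Q$, where $Q:=|\{(f_1,f_2,x): x\in A,\ f_1(x)\in B,\ f_2(x)\in B\}|$.

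Next I would pass to quotients. For a triple counted by $Q$, put $y_2=f_2(x)$ and $y_1=f_1(x)$; then $g:=f_1f_2^{-1}$ satisfies $g(y_2)=y_1$ with $(y_2,y_1)\in B\times B$, and since $(f_1,f_2,y_2)$ recovers $x=f_2^{-1}(y_2)$, dropping the constraint $x\in A$ gives
$$Q\le \sum_g r(g)\,i(g),\qquad r(g):=|\{(f_1,f_2)\in T^2: f_1f_2^{-1}=g\}|,\quad i(g):=|g\cap(B\times B)|,$$
where, directly from the definition, $\sum_g r(g)^2=E(T)$. Decomposing over dyadic ranges $r(g)\sim\Delta$ with level sets $G_\Delta$, one has $Q\ll \sum_\Delta \Delta\,I(B\times B,G_\Delta)$ together with the two competing bounds $|G_\Delta|\le \min\{E(T)/\Delta^2,\ |T|^2/\Delta\}$. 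Applying Theorem~\ref{asymmetric} to each $I(B\times B,G_\Delta)$, its leading term contributes $\Delta\,|B|^{7/5}|G_\Delta|^{4/5}$, which decreases in $\Delta$ under the first bound and increases under the second, so the two balance at the crossover $\Delta\sim E(T)/|T|^2$, where the geometric series peaks at $|B|^{7/5}|T|^{6/5}E(T)^{1/5}$. Substituting $Q\ll |B|^{7/5}|T|^{6/5}E(T)^{1/5}$ into $I^2\le|A|Q$ then yields exactly the main term $|A|^{1/2}|B|^{7/10}|T|^{3/5}E(T)^{1/10}$.

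The hard part will be the lower-order terms rather than this clean crossover. One must check that the secondary term $|B|^{13/5}|G_\Delta|^{1/5}$ and the additive $|G_\Delta|$ term coming from Theorem~\ref{asymmetric}, summed over all dyadic scales (and in particular over the sparse levels where $|G_\Delta|<|B|^2$, on which the trivial bound $i(g)\le |B|$ must be used instead), stay subordinate to the main term or to $|B|^{1/2}|T|$; the hypothesis $|B|\le p^{1/2}$ is what keeps every invocation of the vehicle inside its admissible range $|B|\,|G_\Delta|\ll p^2$ and is essential here. I expect this bookkeeping to be the genuine obstacle, whereas the energy identity and the balancing are routine.
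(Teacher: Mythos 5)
Your scheme is the one the paper intends (the paper never writes this proof out; it only points to the Rudnev--Shkredov argument with Theorem~\ref{asymmetric} as vehicle, which is exactly what you reconstruct): pruning at richness $|B|^{1/2}$ to create the error term, Cauchy--Schwarz fibred over $A$, passage to quotients $g=f_1f_2^{-1}$ with $\sum_g r(g)^2=E(T)$, dyadic decomposition in $r(g)$, and Theorem~\ref{asymmetric} on each level set, balanced at $\Delta\sim E(T)/|T|^2$. Your main-term computation $Q\ll |B|^{7/5}|T|^{6/5}E(T)^{1/5}$, hence $I\ll |A|^{1/2}|B|^{7/10}|T|^{3/5}E(T)^{1/10}$, is exactly right.

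However, the parts you defer as ``bookkeeping'' contain a genuine gap, and one of your specific claims is wrong. First, $|B|\le p^{1/2}$ does \emph{not} by itself keep the vehicle admissible: what is needed is $|B||G_\Delta|\ll p^2$, i.e.\ $|G_\Delta|\ll p^{3/2}$, whereas a priori $|G_\Delta|$ can be as large as $\min\{|T|^2/\Delta,\,p^3\}$; the hypothesis constrains $|B|$, not $|G_\Delta|$. Second, and more seriously, the sparse quotients cannot all be absorbed. Quotients with $i(g)=O(1)$ contribute up to $\sum_g r(g)=|T|^2$ to your upper bound for $Q$ --- this loss is incurred the moment you drop the constraint $x\in A$ in passing from $Q$ to $\sum_g r(g)i(g)$, so no choice of incidence bound or trivial bound on those levels can recover it --- and after Cauchy--Schwarz it yields a term $|A|^{1/2}|T|$ in the bound for $I$. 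This term is dominated by the error term only when $|A|\le|B|$, and by the main term only when $|T|^4\ll |B|^7E(T)$; since $E(T)$ can be as small as $|T|^2$, the regime $|A|>|B|$, $|T|\gg|B|^{7/2}$ is not covered. Concretely, with $|B|=n$, $|A|=n^2$, $|T|=n^4$, $E(T)\sim n^8$, the theorem claims $I\ll n^{4.9}$, while your argument cannot give better than $|A|^{1/2}|T|=n^5$ (in this case the additive term $|G_\Delta|$ of Theorem~\ref{asymmetric} is sharp for the level $\Delta=1$, so your dense/sparse dichotomy does not help). Closing this requires a further input, none of it routine: for instance, first reducing to the case where the claimed bound is below the trivial bound $\min\{|A|,|B|\}|T|$, or exploiting that quotients live in $PGL_2(\F_p)$, whence $E(T)\gg |T|^4/p^3$, which rescues the range $|B|\ge p^{3/7}$ but not all of $|B|\le p^{1/2}$, or a separate argument for quotients with few incidences.
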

The quantity $E(T)$ is termed the \emph{energy} of $T$. Such energies have been studied in the case where $T$ is a set of lines, see for example \cite{affinegroup}.

Rudnev and Wheeler \cite{MishaJames} specifically studied the case where $T$ is a set of translates (which we now rename as $H$) of the modular hyperbola $xy=\pm1$, and proved energy bounds for such sets. Translates of $xy=1$ are equations of the form
$$(y-a)(x-b) = 1$$
which can be rearranged to the form of a M\"{o}bius transformation. Applying the energy bound \cite[Lemma 5.2]{MishaJames} for $E(H)$ (explicitly $E(H)\ll|H|^2M$ with $M$ as defined below), Theorem \ref{energyversion} implies the following.

\begin{theorem}\label{asymmetrichyperbola} 
Let $A \times B \subseteq \mathbb F_p^2$ be a set of points with $|B| \leq p^{1/2}$, and let $H$ be a set of translates of the hyperbola $xy=\pm1$. Then we have
$$I(A \times B, H) \ll |A|^{1/2}|B|^{7/10}|H|^{4/5}M^{1/10} + |B|^{1/2}|H|$$
where $M$ is the maximum number of translates in $H$ having the same $x$-translate or $y$-translate.
\end{theorem}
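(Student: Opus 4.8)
The plan is to derive Theorem \ref{asymmetrichyperbola} as a direct corollary of Theorem \ref{energyversion} by specializing to the family $H$ of hyperbola translates and substituting the known energy bound. First I would observe that each translate $(y-a)(x-b)=1$ of the hyperbola $xy=1$ can be rearranged as $y = a + \frac{1}{x-b} = \frac{ax + (1-ab)}{x - b}$, which is exactly a M\"{o}bius transformation; the analogous rearrangement handles translates of $xy=-1$. Hence $H$ is a legitimate set of M\"{o}bius transformations, and the incidence count $I(A \times B, H)$ matches the quantity bounded in Theorem \ref{energyversion}. The point sets in both statements are identical, and the hypothesis $|B| \leq p^{1/2}$ is inherited verbatim, so Theorem \ref{energyversion} applies with $T = H$ and yields
\begin{equation}
I(A \times B, H) \ll |A|^{1/2}|B|^{7/10}|H|^{3/5}E(H)^{1/10} + |B|^{1/2}|H|. \notag
\end{equation}

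The substantive step is to insert the energy bound $E(H) \ll |H|^2 M$ from \cite[Lemma 5.2]{MishaJames}, where $M$ is the maximum number of translates in $H$ sharing a common $x$-translate or $y$-translate. Raising this to the power $1/10$ gives $E(H)^{1/10} \ll |H|^{1/5}M^{1/10}$, and combining the $|H|$-powers $|H|^{3/5} \cdot |H|^{1/5} = |H|^{4/5}$ produces the main term $|A|^{1/2}|B|^{7/10}|H|^{4/5}M^{1/10}$. The error term $|B|^{1/2}|H|$ is unchanged, completing the bound exactly as stated.

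I expect the only genuine subtlety to be bookkeeping rather than mathematics: one must confirm that the parametrization of hyperbola translates by M\"{o}bius matrices is a bijection onto its image (so that distinct translates correspond to distinct transformations and no incidences are miscounted), and that the definition of $M$ used in \cite[Lemma 5.2]{MishaJames} agrees with ``the maximum number of translates having the same $x$-translate or $y$-translate'' as phrased here. Since Theorem \ref{energyversion} is assumed proven and the energy estimate for $H$ is quoted directly from \cite{MishaJames}, there is no new analytic obstacle; the work is purely the verification that $H$ satisfies the hypotheses and that the exponent arithmetic $3/5 + 2/10 = 4/5$ closes correctly.
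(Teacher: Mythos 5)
Your proposal is correct and is precisely the paper's own derivation: Theorem \ref{asymmetrichyperbola} is stated as an immediate consequence of Theorem \ref{energyversion} combined with the energy bound $E(H) \ll |H|^2 M$ from \cite[Lemma 5.2]{MishaJames}, with the same rearrangement of $(y-a)(x-b)=\pm 1$ into M\"{o}bius form and the same exponent arithmetic $|H|^{3/5} \cdot |H|^{1/5} = |H|^{4/5}$. Nothing is missing.
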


 Remarking on $M$, most obviously it is bounded above by $M\leqslant |H|$ and so is an improvement on the trivial energy bound for all but the very worst case when all translates share an abscissa or ordinate and thus are all on a horizontal or vertical line. At the opposite extreme when $M=1$ the energy bound of the hyperbola is minimal at $|H|^2$, this occurs when each translate maps to a different horizontal and vertical line. When $|H|>|A|^{3/2}$, \cite{MishaJames} instead uses a pruning argument to improve on this $M$, instead allowing it to be replaced with a factor of $|H|^{2/11}|A|^{8/11}$ in cases where this is an improvement. 

\subsection{Applications}

 Given sets $A,B \subseteq \F_p$, we define the \emph{sumset} and \emph{product set} of $A$ and $B$ as 
$$A+B = \{ a+b : a \in A, b \in B \}, \qquad AB = \{ab:a \in A, b \in B \}.$$
Furthermore, we define the representation function
$$r_{AB}(x) = \{ (a,b) \in A \times B : ab = x \}.$$
As a first application, we improve a result of Shkredov \cite{shkredovhyperbolas} regarding the number of representations of a non-zero product $\lambda \in AA$, given $A$ has small sum-set. Specifically, Theorem \ref{asymmetrichyperbola} implies the following more general result.

\begin{corollary} \label{representations}
Let $A,B \subseteq \F_p^*$ with $|A| = |B| = N$ satisfy $|A + B| \leq KN \leq p^{1/2}$. Then for all non-zero $\lambda \in AB$, we have
$$r_{AB}(\lambda) \ll K^{6/5}N^{9/10}.$$
\end{corollary}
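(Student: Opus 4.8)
The plan is to convert the representation‑counting problem into an incidence problem between the Cartesian product $(A+B)\times(A+B)$ and a family of $N^2$ translates of the hyperbola $xy=\lambda$, and then feed this into Theorem \ref{asymmetrichyperbola}. First I would write $r = r_{AB}(\lambda)$ and list the representations as $(a_i,b_i)$ with $a_ib_i = \lambda$, $a_i \in A$, $b_i \in B$, for $i = 1,\dots,r$; note the $a_i$ (and hence the $b_i = \lambda/a_i$) are pairwise distinct, so these are $r$ genuinely distinct points. Observe also that $K \geq 1$ since $|A+B| \geq |A| = N$.

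The key construction is to index hyperbolas by pairs $(c,d) \in B \times A$, setting $H_{c,d}\colon (x-c)(y-d) = \lambda$. Each $H_{c,d}$ is a translate of $xy=\lambda$, which is affinely equivalent to $xy=1$ after scaling the $y$-coordinate by $\lambda^{-1}$ (this scaling preserves incidences and sends $(A+B)\times(A+B)$ to another Cartesian product of the same dimensions), so Theorem \ref{asymmetrichyperbola} applies. The point of this choice is twofold. Each $H_{c,d}$ passes through the $r$ points $(a_i + c,\, b_i + d)$, and since $a_i + c \in A+B$ and $b_i + d \in B+A = A+B$, all of these lie in $P := (A+B)\times(A+B)$; hence $I(P,H) \geq r\,|H|$. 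Distinct pairs $(c,d)$ give distinct hyperbolas (the centre is $(c,d)$), so $|H| = N^2$, while for each fixed $x$-translate $c$ there are exactly $|A| = N$ hyperbolas, and symmetrically for each $y$-translate, so the parameter $M$ of Theorem \ref{asymmetrichyperbola} equals $N$. This produces the lower bound $I(P,H) \geq r N^2$.

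Next I would apply Theorem \ref{asymmetrichyperbola} to $P = (A+B)\times(A+B)$ and $H$, using $|A+B| \leq KN \leq p^{1/2}$ (so the hypothesis $|B|\le p^{1/2}$ on the second coordinate set holds) and $M = N$:
\begin{equation*}
r N^2 \;\leq\; I(P,H) \;\ll\; (KN)^{1/2}(KN)^{7/10}(N^2)^{4/5} N^{1/10} + (KN)^{1/2} N^2 .
\end{equation*}
Collecting exponents gives $r N^2 \ll K^{6/5} N^{29/10} + K^{1/2} N^{5/2}$; dividing by $N^2$ and absorbing the smaller second term (valid since $K,N\ge 1$) leaves $r \ll K^{6/5} N^{9/10}$, which is exactly the claimed bound.

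The main obstacle, and really the crux of the argument, is choosing the construction so that the incidence inequality balances to the exponent $9/10$. Translating in only one coordinate would force every hyperbola to share a common translate ($M = |H|$), which ruins the estimate; insisting instead on pairwise-distinct translates ($M=1$) caps the family at $N$ hyperbolas and yields only the trivial bound $r \ll N$. The productive choice is the full $N^2$-element family $B \times A$, accepting $M = N$: it is precisely this trade-off between $|H|$ and $M$ that yields the $9/10$ exponent. Beyond this, the only care needed is in checking that the field-size and $|B|\le p^{1/2}$ hypotheses of Theorem \ref{asymmetrichyperbola} transfer to the enlarged product $(A+B)\times(A+B)$, which they do under $KN \le p^{1/2}$.
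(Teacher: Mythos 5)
Your proposal is correct and takes essentially the same route as the paper: both arguments lower-bound the incidence count between the point set $(A+B)\times(A+B)$ and the $N^2$ hyperbola translates indexed by pairs of elements of $A$ and $B$ by $r_{AB}(\lambda)N^2$, note that the maximum multiplicity of a shared translate is $M=N$, and then apply Theorem \ref{asymmetrichyperbola} and divide by $N^2$. The only difference is cosmetic: the paper disposes of general $\lambda$ with a ``without loss of generality $\lambda=1$'' remark, whereas you justify this explicitly via the $y$-coordinate scaling by $\lambda^{-1}$, which is a slightly more careful treatment of the same step.
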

The condition $|A| = |B|$ is not strictly necessary above, as the proof can be followed with arbitrary sizes of $A$ and $B$. For simplicity, we have stated the result in the balanced case.

As a second application, we prove an equivalent of Beck's Theorem for M\"obius transformations. That is, we show that any point set in $\F_p^2$ lies in one of two extremes: either many points lie on a single transform, or the set defines many transforms, where we say a transformation is defined by a point set if it passes through three of its points. 
\begin{corollary}\label{cor:Becks}
Let $P \subseteq \F_p^2$ be an arbitrary point set, with $|P| \leq p^{15/13}$. Then there exist positive constants $C$ and $K$ such one of the following two statements is true:
\begin{itemize}
    \item There is a M\"obius transformation containing at least $\frac{|P|}{C}$ of the points.

\item The point set $P$ defines at least $\frac{|P|^{12/7}}{K}$ M\"{o}bius transformations.
\end{itemize}
\end{corollary}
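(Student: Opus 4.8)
The plan is to derive this Beck-type dichotomy from the $k$-rich bound in Theorem~\ref{main}, using a standard dyadic pigeonholing argument. First I would set up the counting framework. Let $\tau$ denote the number of M\"{o}bius transformations defined by $P$, that is, triples of points of $P$ that are collinear on some transformation, counted as transformations (each transformation being defined by any three of the points it contains). The goal is to show that if no single transformation contains $\gg |P|$ points, then $\tau \gg |P|^{12/7}$. I would relate $\tau$ to the incidence-rich count: a transformation containing exactly $k$ points of $P$ accounts for $\binom{k}{3} \sim k^3$ ``defining'' triples, so I want to control the contribution of transformations by their richness.

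The key step is the dyadic decomposition. Partition the transformations through at least three points of $P$ into dyadic classes according to richness: for each $j$, let $T_j$ be the set of transformations with richness in $[2^j, 2^{j+1})$. By the contrapositive hypothesis we may assume every transformation has richness at most $|P|/C$, so $j$ ranges up to about $\log_2(|P|/C)$. Applying the second part of Theorem~\ref{main} with $k = 2^j$ gives $|T_j| \ll |P|^{15/4} 2^{-19j/4} + |P|^2 2^{-2j}$, valid provided $|P| \leq p^{15/26}$ (I would need to either assume this stronger size restriction or check that the relevant rich transformations only appear in the admissible range; this is a point to handle carefully). The total number of defining triples is then bounded as a sum over $j$ of roughly $2^{3j}|T_j|$, and conversely the number of defining triples is at most $\binom{|P|}{3} \ll |P|^3$, but more usefully I want a \emph{lower} bound on $\tau$ in terms of the total triple count.

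The cleaner route is to run the argument in the forward direction on the total number of collinear-on-a-transformation triples. Every triple of points of $P$ that does \emph{not} lie on a common transformation is irrelevant; but generically three points determine a unique transformation, so the number of triples lying on a common transformation is $\Theta(|P|^3)$ up to degenerate configurations, and I would argue this count is $\gg |P|^3$ after removing a negligible number of degenerate triples. Each transformation of richness $\sim 2^j$ absorbs $\sim 2^{3j}$ of these triples, so $|P|^3 \ll \sum_j 2^{3j} |T_j|$ where $\tau = \sum_j |T_j|$. Substituting the rich bound, $\sum_j 2^{3j}|T_j| \ll \sum_j \left( |P|^{15/4} 2^{-7j/4} + |P|^2 2^{j} \right)$. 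The first sum converges and contributes $\ll |P|^{15/4}$ near $j=0$, while the second is dominated by the top dyadic level $2^{j_{\max}} \sim |P|/C$, contributing $\ll |P|^3/C$. The point is that once we forbid the first alternative (so $C$ is a large constant making the $|P|^3/C$ term absorbable into a fraction of the left side), the surviving inequality forces the richness to be concentrated in a range where balancing the two terms of the rich bound yields $\tau \gg |P|^{12/7}$.

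The main obstacle I expect is getting the exponent $12/7$ to come out exactly, which requires identifying the correct dyadic level that dominates $\tau = \sum_j |T_j|$ and balancing it against the triple-count lower bound. Concretely, at the critical richness $k_0$ where the two terms of the $k$-rich bound coincide (solving $|P|^{15/4}k^{-19/4} = |P|^2 k^{-2}$ gives $k_0 \sim |P|^{7/11}$), one evaluates $|T_{k_0}| \sim |P|^2 k_0^{-2} \sim |P|^{8/11}$, which does not immediately read off as $|P|^{12/7}$; so the correct balance must instead come from optimizing the constraint $|P|^3 \ll 2^{3j}|T_j|$ against the rich bound over all $j$, and extracting the minimal forced value of $\sum_j |T_j|$. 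I would carry out this optimization explicitly: express $\tau$ as dominated by a single dyadic scale, use $|P|^3 \ll k^3 |T_k|$ to lower-bound $|T_k| \gg |P|^3 k^{-3}$, and intersect this with the regime where this is consistent with the upper rich bound, choosing $k$ to maximize the resulting lower bound on $|T_k|$; the extremal $k$ should land precisely where $|P|^3 k^{-3}$ meets $|P|^{15/4}k^{-19/4}$, giving $k \sim |P|^{3/7}$ and hence $\tau \gg |P|^3 k^{-3} \sim |P|^{12/7}$. Ensuring all the size constraints ($|P| \leq p^{15/13}$ versus the stricter $p^{15/26}$ needed for the rich bound) are compatible, and that the degenerate-triple removal is genuinely negligible, are the two places where care is required.
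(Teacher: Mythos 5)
Your proposal is essentially the paper's own argument: the same dyadic decomposition of transforms by richness, the same application of the $k$-rich bound of Theorem~\ref{main}, the same count of $\sim k^3$ defining triples per $k$-rich transform, and the same critical thresholds --- the poor/middle cutoff at $k \sim |P|^{3/7}$ (where $|P|^3 k^{-3}$ meets $|P|^{15/4}k^{-19/4}$, yielding $\gg |P|^{12/7}$ poor transforms) and the top cutoff at $k \sim |P|$ (yielding the first alternative). The one substantive difference in execution is that the paper avoids pigeonholing to a single dominant dyadic scale, and hence avoids the logarithmic loss that step would cost against the stated constant $K$: it instead sums the triple count over the entire middle range $C|P|^{3/7} \leq 2^j \leq |P|/C^{7/4}$, absorbs that sum (which is $O(|P|^3/C^{7/4})$) using the large constant $C$, and then bounds the poor transforms all at once via $|T| \cdot (2C|P|^{3/7})^3 \gg |P|^3$; the two caveats you rightly flag --- triples lying on no transformation at all, and the $p^{15/26}$ constraint attached to the rich bound in Theorem~\ref{main} versus the $p^{15/13}$ hypothesis --- are genuine issues, but the paper's own proof passes over both in silence.
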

This follows from a standard proof of Beck's Theorem, replacing the Sz\'emeredi-Trotter bound (or equivalently the Stevens - de Zeeuw incidence bound in finite fields) for rich lines with Theorem \ref{main}, and making the required adjustments for the fact that our transforms are defined by \emph{three} points.

A common use of incidence results is to prove so-called `expander' results. Given a set $A \subseteq \F_p$, we can form, for instance, the sets 
$$AA+A = \{ab+c : a,b,c \in A \} \qquad A(A+1) = \{a(b+1):a,b \in A\}.$$
Both of these sets are expected to be significantly larger than the input set $A$ (subject to a necessary condition that $|A|$ is small with respect to the characteristic $p$), and such claims can be proved very simply from linear incidence theorems. Because of this growth phenomenon, these sets are termed `expanders'. For a nice introduction to this topic in finite fields, see \cite{expandersfinitefields}. Theorem \ref{asymmetric} and Theorem  \ref{asymmetrichyperbola} can be naturally applied to prove the following expander results.

\begin{corollary}\label{expander}
For all $A \subseteq \F_p$ with $|A|\leq p^{1/2}$, we have
$$\left| \left\{ a + \frac{1}{b-c} : a,b,c \in A \right\} \right| \gg |A|^{6/5},$$
$$\left| \left\{ \frac{ab+c}{b+d} : a,b,c,d \in A \right\} \right| \gg |A|^{4/3}.$$
\end{corollary}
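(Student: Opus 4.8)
The plan is to derive both expander bounds by setting up a suitable incidence configuration and applying the asymmetric incidence results as ``vehicles'', following the standard paradigm: encode each realisation of the expanded quantity as a point, encode the parametrising data as transformations, and then read off a lower bound on the size of the image set from the fact that if the image were small, the incidence count would be forced too high.

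\textbf{First expander.} For the set $Q := \{a + \frac{1}{b-c} : a,b,c \in A\}$, I would first fix a pair $(b,c)$ and observe that $x \mapsto x + \frac{1}{b-c}$ is an affine (hence M\"obius) transformation. More usefully, I would reorganise: the relation $q = a + \frac{1}{b-c}$ rearranges to $(q-a)(b-c) = 1$, i.e. $(y - a)(x - c) = 1$ evaluated at the point $(x,y) = (b,q)$. Thus each pair $(a,c) \in A \times A$ determines a translate of the hyperbola $xy = 1$, and the point $(b,q) \in A \times Q$ lies on it precisely when $q = a + 1/(b-c)$. I would therefore take $H$ to be the collection of these $\leq |A|^2$ hyperbola translates and the point set to be $A \times Q$. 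Every triple $(a,b,c)$ yields one incidence, so $I(A\times Q, H) \geq |A|^3$ (up to multiplicity bookkeeping). Now I would invoke Theorem \ref{asymmetrichyperbola} with the point set $A \times Q$ (so $|B| = |Q|$ playing the role of the second coordinate, subject to $|Q| \leq p^{1/2}$, which holds in the relevant regime) and the parameter $M$ controlled by the Cartesian structure of the translates. Comparing the lower bound $|A|^3$ against the upper bound and solving for $|Q|$ should yield $|Q| \gg |A|^{6/5}$.

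\textbf{Second expander.} For $R := \{\frac{ab+c}{b+d} : a,b,c,d \in A\}$, the key observation is that the map $x \mapsto \frac{ax + c}{x + d}$ is genuinely a M\"obius transformation (with matrix $\begin{pmatrix} a & c \\ 1 & d\end{pmatrix}$, nondegenerate for generic $a,c,d$), so each triple $(a,c,d) \in A^3$ parametrises a transformation $f_{a,c,d}$, and $r = f_{a,c,d}(b)$ with $b \in A$. Hence I would take $T := \{f_{a,c,d} : a,c,d \in A\}$, a set of $\leq |A|^3$ transformations, and the point set $A \times R$, so that each $4$-tuple $(a,b,c,d)$ contributes an incidence and $I(A \times R, T) \geq |A|^4$. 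Applying Theorem \ref{asymmetric} with $|A| = |A|$ and $|B| = |R|$, under the hypothesis $|A||T| \ll p^2$ (guaranteed by $|A| \leq p^{1/2}$ and $|T| \leq |A|^3 \leq p^{3/2}$, which needs a short check), and balancing the lower bound $|A|^4$ against the main term $|A|^{4/5}|R|^{3/5}|T|^{4/5}$ should give $|R| \gg |A|^{4/3}$.

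\textbf{The main obstacle} I anticipate is the standard but delicate bookkeeping of \emph{multiplicity and degeneracy}: I must verify that distinct parameter tuples give distinct transformations (or control how many tuples collapse to the same transformation/hyperbola), since the incidence count $I$ counts distinct point-curve pairs, whereas the trivial lower bound counts parameter tuples. In both cases a transformation may be hit by many tuples, and a point of $A \times R$ may coincide for different $b$, so I would need either a Cauchy--Schwarz step to pass between the energy/representation count and $I$, or an argument that the generic fibre is small. A secondary technical point is checking that the auxiliary dimension ($|Q|$ or $|R|$) satisfies the size hypotheses of the incidence theorems (the $|B| \leq p^{1/2}$ and $|A||T| \ll p^2$ constraints); if the target set were already very large these would be automatic, and if small they must be tracked. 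Once the degenerate/multiplicity contributions are shown to be lower-order, the final optimisation is a routine comparison of exponents.
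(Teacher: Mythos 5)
Your second expander argument coincides with the paper's proof: transformations $y=\frac{ax+c}{x+d}$ with $(a,c,d)\in A^3$, $ad\neq c$ (at most $|A|^2$ degenerate triples, and the parametrisation is injective since the bottom-left matrix entry is normalised to $1$), point set $A\times R$, lower bound $|A|^4$, then Theorem \ref{asymmetric} with $|A||T|\ll |A|^4\leq p^2$; the multiplicity issues you flag are routine and resolve exactly as you suggest.

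The first expander, however, contains a genuine gap: you have the orientation of the Cartesian product backwards, and with your orientation the argument cannot deliver the stated exponent. Theorem \ref{asymmetrichyperbola} is asymmetric: for a point set $A\times B$ it requires $|B|\leq p^{1/2}$ and gives $I \ll |A|^{1/2}|B|^{7/10}|H|^{4/5}M^{1/10}+|B|^{1/2}|H|$. You place $Q$ in the second slot. Two problems follow. First, the hypothesis you must check becomes $|Q|\leq p^{1/2}$, but $Q$ is the set being bounded below, so this cannot be assumed; the natural fallback (``if $|Q|>p^{1/2}$ we are done'') fails whenever $p^{5/12}<|A|\leq p^{1/2}$, since there $|A|^{6/5}$ may exceed $p^{1/2}$, and truncating $Q$ to size $p^{1/2}$ destroys the incidence lower bound $|A|^3$. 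Second, and decisively, even granting that hypothesis, your orientation with $|H|=|A|^2$ and $M=|A|$ gives
$$|A|^3 \ll |A|^{1/2}|Q|^{7/10}|A|^{8/5}|A|^{1/10}+|Q|^{1/2}|A|^2 = |A|^{11/5}|Q|^{7/10}+|Q|^{1/2}|A|^2,$$
from which one only extracts $|Q|\gg |A|^{8/7}$, short of the claimed $|A|^{6/5}$. The paper transposes the roles: it takes the point set $Q\times A$, i.e.\ points $(q,b)$, with hyperbolas $(y-c)(x-a)=1$, $(a,c)\in A^2$, so that the known small set $A$ occupies the slot carrying both the $p^{1/2}$ constraint (which is exactly the corollary's hypothesis) and the exponent $7/10$, while the unknown set $Q$ receives the exponent $1/2$. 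This yields $|A|^3 \ll |Q|^{1/2}|A|^{12/5}+|A|^{5/2}$, hence $|Q|\gg |A|^{6/5}$. In short, the obstacle that actually bites is not the multiplicity bookkeeping you anticipated, but choosing which factor plays which role in the asymmetric theorem.
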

Note that the best we could expect for the first expander is an exponent of two, which can be seen by taking $A$ with $|A+A| \ll |A|$.

As a final application, we consider a problem in the family of 'similar configurations', as considered by, for example, Aiger and Sharir \cite{aigersharir}. Such questions ask for the maximum possible number of realisations of a certain configuration inside a larger set. These configurations are usually considered up to some equivalence, e.g. rigid transformations. As an example, taking the configuration to be two points of unit distance in the  real plane, and the equivalence to be rigid transformations, we arrive at the the unit distance problem.

Our choice of `equivalence' will in fact be up to projective transformations. The reason for this is that the set of M\"{o}bius transformations as defined above is equivalent to the set of projective transformations on $\Pb(\F_p)$. Indeed, both are given by $2 \times 2$ matrices of non-zero determinant, up to scalar multiplication. Using this, we can prove the following.

\begin{corollary} \label{similarity} 
Let $A,S \subseteq \mathbb F_p$ with $|S|^3|A|^2\leq p^2$. Then the number of subsets $A' \subseteq A$ which are projectively equivalent to $S$ is $O\left(\frac{|A|^3}{|S|}\right)$.
\end{corollary}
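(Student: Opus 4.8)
The plan is to translate the counting of projectively-equivalent subsets into the $k$-rich transformation bound of Theorem \ref{asymmetric}. First I would dispose of the degenerate regimes. If $|S| > |A|$ there are no subsets of $A$ of the right size and the count is zero, while if $|S| \leq 2$ a M\"{o}bius transformation is not determined by $S$ and the claim is easily checked directly (every $2$-subset of $A$ is projectively equivalent to any pair, giving $O(|A|^2) = O(|A|^3/|S|)$ subsets). So throughout I assume $3 \leq |S| \leq |A|$.

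The key is the dictionary between subsets and transformations. A subset $A' \subseteq A$ is projectively equivalent to $S$ precisely when there is a M\"{o}bius transformation $f$ with $f(S) = A'$; since every such $f$ is a bijection of $\Pb(\F_p)$, it sends the $|S|$ points of $S$ to $|S|$ distinct points, and $f(S) \subseteq A \subseteq \F_p$ forces no point of $S$ to map to $[1;0]$. The assignment $f \mapsto f(S)$ maps the finite set $\{ f : f(S) \subseteq A \}$ \emph{onto} the collection of valid $A'$, so the number of such subsets is at most the number of transformations $f$ with $f(S) \subseteq A$. (One could divide by the order of the set-stabiliser of $S$, but for an upper bound I simply discard this factor, which can only help.)

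Next I would recast the condition $f(S) \subseteq A$ as richness. Take the point set $P = S \times A \subseteq \F_p^2$, with the first coordinate recording an input and the second an output, so that $(s,a)$ lies on the curve of $f$ exactly when $f(s) = a$. If $f(S) \subseteq A$ then the $|S|$ points $(s,f(s))$ for $s \in S$ are distinct points of $P$, all incident to $f$, so $f$ is $|S|$-rich with respect to $P$. Hence $|\{ f : f(S) \subseteq A \}| \leq |T_{|S|}|$ for this point set.

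Finally, I would apply the $k$-rich half of Theorem \ref{asymmetric} to the product $S \times A$, with $S$ and $A$ taking the roles of the two factors (in that order) and $k = |S|$. The admissibility hypothesis then reads $|S|^3|A|^2 \leq p^2$, which is exactly the assumption of the corollary, and the bound becomes
$$|T_{|S|}| \ll \frac{|S|^4|A|^3}{|S|^5} + \frac{|S|^2|A|^2}{|S|^2} = \frac{|A|^3}{|S|} + |A|^2.$$
Since $|S| \leq |A|$ we have $|A|^2 \leq |A|^3/|S|$, so the first term dominates and the count is $O(|A|^3/|S|)$, as claimed. The one genuinely delicate point is fixing the orientation of the Cartesian product so that the condition lines up as $|S|^3|A|^2 \leq p^2$ rather than the wrong-way $|A|^3|S|^2 \leq p^2$; the reversed choice would both mismatch the hypothesis and yield the weaker bound $|A|^4/|S|^2$, so getting this matching right is what makes the estimate sharp.
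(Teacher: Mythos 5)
Your proposal is correct and follows essentially the same route as the paper: recast each equivalent subset as a M\"{o}bius transformation $f$ with $f(S) \subseteq A$, observe that such an $f$ is $|S|$-rich with respect to the point set $S \times A$, and apply the $k$-rich bound of Theorem \ref{asymmetric} with $k = |S|$, using $|S| \leq |A|$ to absorb the $|A|^2$ term. Your added care with the degenerate cases $|S| \leq 2$ (where the $k \geq 3$ requirement fails) and with the surjectivity of $f \mapsto f(S)$ tidies up details the paper leaves implicit, but the substance of the argument is identical.
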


In the above, we say that two sets $A$ and $B$ in $\mathbb F_p$ are projectively equivalent if there exists a projective transformation $\pi$ such that $\pi(A) = B$.

\section{Proof of Theorem \ref{main}}

Before beginning the proof, we explain the main ideas. The aim is to count the number of $k$-rich transformations passing through each $q \in P$. For each fixed $q$, the transformations through $q$ can be mapped to a set of lines, and by adjusting the point set, this map (almost) completely preserves incidences. We can then apply the incidence theorem of Stevens and de Zeeuw \cite{SophieFrank}, obtaining a bound for the $k$-rich transformations through $q$. Summing this bound over each $q$, we count each $k$-rich transformation at least $k$ times. This observation gives us a bound on the number of $k$-rich transformations with respect to the whole point set, proving the second part of the theorem. The first part then follows from the second after a standard dyadic summation.
\begin{proof}
We begin by fixing a point $q = (q_1,q_2) \in P$. Let $T_q$ denote the transformations in $T$ that pass through $q$, and further let $T_{q,k}$ be the $k$-rich transformations in $T_q$. Our goal will be to bound the size of $T_{q,k}$. To do this, we will make use of the following corollary of an incidence result of Stevens and de Zeeuw \cite{SophieFrank} regarding $k$-rich lines.

\begin{corollary}\label{k-rich}
Let $P$ be a set of points in $\mathbb F_p^2$ such that $|P| \leq p^{15/13}$, and let $L_k$ be the set of lines which pass through at least $k \geq 2$ points of $P$. Then we have
$$|L_k| \ll \frac{|P|^\frac{11}{4}}{k^\frac{15}{4}} + \frac{|P|}{k}.$$
\end{corollary}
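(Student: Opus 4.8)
The plan is to derive Corollary \ref{k-rich} directly from the Stevens--de Zeeuw point--line incidence bound by a standard dyadic/double-counting argument. Recall that Stevens and de Zeeuw prove that for a point set $P \subseteq \F_p^2$ with $|P| \leq p^{15/13}$ and a set of lines $L$, one has $I(P,L) \ll |P|^{11/15}|L|^{11/15} + |P| + |L|$. The idea is to apply this to the specific set $L_k$ of lines that are $k$-rich with respect to $P$, and to extract an upper bound on $|L_k|$ by playing the trivial lower bound $I(P,L_k) \geq k|L_k|$ against the incidence upper bound.

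First I would observe that, by definition of $k$-richness, every line in $L_k$ contributes at least $k$ incidences, so $k|L_k| \leq I(P,L_k)$. Feeding in the Stevens--de Zeeuw bound applied to the pair $(P, L_k)$ gives
$$k|L_k| \ll |P|^{11/15}|L_k|^{11/15} + |P| + |L_k|.$$
Now I would treat this as an inequality to be solved for $|L_k|$, splitting into cases according to which of the three terms on the right dominates. In the regime where the first term dominates, rearranging $k|L_k| \ll |P|^{11/15}|L_k|^{11/15}$ isolates $|L_k|^{4/15} \ll |P|^{11/15}/k$, and raising both sides to the power $15/4$ yields the main term $|P|^{11/4}/k^{15/4}$. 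In the regime where the $|P|$ term dominates, $k|L_k| \ll |P|$ directly gives the second term $|P|/k$.

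The remaining case, where the $|L_k|$ term on the right dominates, requires a small amount of care: there $k|L_k| \ll |L_k|$ would force $k \ll 1$, which is harmless since $k \geq 2$ means this term can be absorbed into the others (for $k$ bounded below by an absolute constant the term $|L_k|$ on the right can be subtracted into the left-hand side, as $k|L_k| - |L_k| = (k-1)|L_k| \gg k|L_k|$). I expect this absorption step — confirming that the trailing $|L_k|$ term on the right is genuinely negligible and does not obstruct the rearrangement — to be the only subtle point, and it is routine given $k \geq 2$. Assembling the two surviving cases gives $|L_k| \ll |P|^{11/4}/k^{15/4} + |P|/k$, which is exactly the claimed bound.
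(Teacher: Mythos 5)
The paper offers no proof of this corollary at all --- it is imported as a known consequence of the Stevens--de Zeeuw incidence theorem --- so there is nothing internal to compare against; your derivation is the standard argument behind that citation, and it is correct. Two points deserve care, though neither is a real gap. First, the form of Stevens--de Zeeuw you quote (hypothesis only on $|P|$, conclusion $I(P,L)\ll |P|^{11/15}|L|^{11/15}+|P|+|L|$) is itself a repackaging of their theorem, whose published statement carries a balance condition $|P|^{7/8}\leq |L|\leq |P|^{8/7}$ and a characteristic condition coupling $|P|$ and $|L|$; the quoted form follows by projective duality, which orients the characteristic condition so that $|P|^{13}\leq p^{15}$ (i.e.\ $|P|\leq p^{15/13}$) suffices for \emph{every} $|L|$, together with the Cauchy--Schwarz bounds that cover the unbalanced ranges. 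This orientation matters: with the condition in its original orientation one cannot verify it for the unknown set $L_k$ without an a priori bound on $|L_k|$, so quoting the black box in the dualized form is exactly what makes your inequality $k|L_k| \ll |P|^{11/15}|L_k|^{11/15}+|P|+|L_k|$ legitimate. Second, the absorption in your third case genuinely needs the implied constant, not just $k\geq 2$: from $k|L_k|\leq C\left(|P|^{11/15}|L_k|^{11/15}+|P|+|L_k|\right)$ one can subtract the last term only when $k\geq 2C$, say; for $2\leq k<2C$ the claimed bound instead holds trivially, because $|L_k|\leq\binom{|P|}{2}\ll |P|^2\ll |P|^{11/4}/k^{15/4}$ when $k$ is bounded by an absolute constant. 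With these two routine repairs your proposal is complete and matches the intended derivation.
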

Any transformation $f \in T_q$ must satisfy $f(q_1) = q_2$. Writing $f$ as
$$f(x) = \frac{ax + b}{cx+d}$$
we have two cases depending on the value of $c$. If $c=0$, then we see that $f(x)$ is actually a line in $\mathbb F_p^2$. At this stage, we do nothing with these transformations. Now assume that $c$ is non-zero, allowing us to scale such that $c=1$. We can now calculate the specific form of $b$, as 
$$q_2 = \frac{aq_1 + b}{q_1 + d} \implies b = q_2(q_1 + d) - aq_1,$$
so that we have
$$M_f = \begin{pmatrix}a & q_2(q_1 + d) - aq_1 \\ 1 & d \end{pmatrix}.$$
Consider the map
$$\phi_q(f) = g_1 ( f (g_2 (x)))$$
for $f \in T_q$, and the functions $g_1$ and $g_2$ being defined as 
$$g_1(x) = \frac{1}{q_2 - x}, \qquad g_2(x) = q_1 - \frac{1}{x}.$$ The idea is that the image of $T_q$ under this map will be a set of \emph{lines} in $\mathbb F_p^2$, where we can apply known incidence theorems. This also requires the preservation of incidences, which will be ensured via an alteration to the point set $P$. To show that $\phi_q(f)$ is indeed a line, we multiply the corresponding matrices.
$$M_{g_1}M_fM_{g_2} = \begin{pmatrix}0 & 1  \\ -1 & q_2 \end{pmatrix}\begin{pmatrix}a & q_2(q_1 + d) - aq_1 \\ 1 & d \end{pmatrix}\begin{pmatrix}q_1 & -1 \\ 1 & 0 \end{pmatrix} = \begin{pmatrix}q_1 + d & -1 \\ 0 & a - q_2 \end{pmatrix}.$$
Note that since $\det(M_{g_1}) = \det(M_{g_2}) = 1$, we have $\det(M_{g_1}M_fM_{g_2}) = \det(M_f) \neq 0$.
Since the bottom left entry is zero, this composition gives the line
$$y = \frac{(q_1 + d)x -1}{a - q_2}.$$
Furthermore, we see that distinct transformations $f \in T_q$ map to distinct lines, i.e. $\phi_q$ is injective, and moreover we cannot have $a = q_2$ as this would imply $\det(M_f) = 0$. Indeed, suppose that for some pair of transformations 
$$f(x) = \frac{ax+(q_2(q_1 + d) - aq_1)}{x+d}, \quad f'(x) =\frac{a'x+(q_2(q_1 + d') - a'q_1)}{x+d'},$$ we have $\phi(f) = \phi(f')$. Comparing the intercepts, we must have $\frac{-1}{a-q_2} = \frac{-1}{a'-q_2} \implies a = a'$, and then by comparing the slopes we find $d = d'$, so that $f = f'$. To prove the second claim, we see that if $a = q_2$, then the determinant of $M_f$ is 
$$\det(M_f) = ad - q_2(q_1+d) - aq_1 = ad - aq_1 - ad - aq_1 = 0.$$

Having mapped the M\"{o}bius transformations to lines, we must now alter the point set $P$ to preserve incidences. We define the map
$$\psi(s_1,s_2) = \left( g_2^{-1}(s_1) , g_1(s_2)\right) = \left( \frac{1}{q_1 - s_1}, \frac{1}{q_2 - s_2}\right)$$
where we make the relevant restrictions $s_1 \neq q_1$ and $s_2 \neq q_2$. When applied to $P$, this gives the point set
$$ P':= \psi(P) = \left\{ \left( \frac{1}{q_1 - s_1}, \frac{1}{q_2 - s_2}\right) : (s_1,s_2)\in P \right\}.$$
We claim that the map $\psi$ satisfies the property that for all $s \in P$ not lying on the lines $x = q_1$ or $y = q_2$, and $f \in T_q$, we have that $s$ lies on $f$ if and only if $\psi(s)$ lies on $\phi(f)$. Indeed, we have
$$s_2 = f(s_1) \iff g_1(s_2) = g_1(f(s_1)) \iff g_1(s_2) = g_1(f(g_2(g_2^{-1}(s_1))))$$
and this final equality is precisely the statement that $\psi(s)$ lies on $\phi(f)$. Finally, the restrictions $s_1 \neq q_1$ and $s_2 \neq q_2$ removes only the single incidence where $(q_1,q_2) = (s_1,s_2)$. Indeed, this follows from $f$ being a bijection. This implies that $k$-rich transformations with respect to $P$ have been mapped to $k-1$ rich lines with respect to $P'$. This lets us bound the quantity $T_{q,k}$ for $k \geq 3$ via Corollary \ref{k-rich}, as
$$|T_{q,k}| \ll \frac{|P'|^{11/4}}{(k-1)^{15/4}} + \frac{|P'|}{k-1} \ll \frac{|P|^{11/4}}{k^{15/4}} + \frac{|P|}{k}.$$
Furthermore, the transformations in $T_{q,k}$ with $c=0$, which are actually lines, must be concurrent through $q$, and the number of such $k$-rich lines is at most $\frac{|P|}{k}$. This is then absorbed into the above bound.

In order to convert this bound for $T_{q,k}$ into a bound on the set $T_k$ of $k$-rich transformations in $T$, we sum over each point $q \in P$, for each point counting the $k$-rich transformations passing through $q$. Note that this necessarily counts each $k$-rich transformation at least $k$ times. Performing this summation, we find
$$|T_k| \leq \frac{1}{k} \sum_{q \in P}|T_{q,k}| \ll \frac{|P|^{15/4}}{k^{19/4}} + \frac{|P|^2}{k^2}.$$

We now complete the proof of Theorem \ref{main} in a standard manner. We perform a dyadic decomposition, and split the sum in terms of some parameter $\Delta$ to be chosen later. In the following, we let $T_{=k}$ denote the set of exactly $k$-rich transformations in $T$.
\begin{align*}I(P,T) & = \sum_{k=1}^{|P|} k |T_{=k}| \\ & = \sum_{k < \Delta} k |T_{=k}| + \sum_{k=\Delta}^{|P|} k |T_{=k}| \\ & \ll \Delta |T| + \sum_{i=1}^{\log|P|} \sum_{\substack{f \in T: \\  2^i \Delta \leq |f \cap P| <2^{i+1}\Delta}} (2^{i+1}\Delta) \\ & \leq \Delta |T| + \sum_{i=1}^{\log|P|} |T_{2^i\Delta}|(2^{i+1}\Delta) \\
& \ll \Delta |T| + \sum_{i=1}^{\log|P|}\left( \frac{|P|^{15/4}}{(2^{i+1}\Delta)^{19/4}} + \frac{|P|^2}{(2^{i+1}\Delta)^2} \right)(2^{i+1}\Delta) \\
& \ll \Delta |T| + \frac{|P|^{15/4}}{\Delta^{15/14}} + \frac{|P|^2}{\Delta}\end{align*}

We now choose $\Delta$ in order to optimise the first two terms. This is achieved when
$$\Delta |T| \sim \frac{|P|^{15/4}}{\Delta^{15/14}} \implies \Delta \sim \frac{|P|^{15/19}}{|T|^{4/19}}.$$
We now recall that the bound applied for $T_{k}$ only applies for $k \geq 3$, therefore we must impose the restriction $\Delta \geq 3$. Our choice for $\Delta$ is then
$$\Delta = \max \left\{ 3, \frac{|P|^{15/19}}{|T|^{4/19}} \right\}.$$
In the case $\Delta =3$, we must have $$\frac{|P|^{15/19}}{|T|^{4/19}} \leq 3 \implies |P|^{15/4} \ll |T|,$$
and thus the bound above gives 
$$I(P,T) \ll |T| + |P|^{15/4} + |P|^{2} \ll |T|.$$
In the second case where we choose $\Delta = \frac{|P|^{15/19}}{|T|^{4/19}}$, we find the bound
$$I(P,T) \ll |P|^{15/19}|T|^{15/19} + |P|^{23/19}|T|^{4/19}.$$
Putting these two bounds together, we conclude that
$$I(P,T) \ll |P|^{15/19}|T|^{15/19} + |P|^{23/19}|T|^{4/19} + |T|$$
as needed.
\end{proof}

\section{Applications}
In this section we prove Corollaries 1-4. We begin by proving Corollary \ref{representations}.
\begin{proof}[Proof of Corollary \ref{representations}]
Without loss of generality, we may assume $\lambda = 1$. Given a pair $(a,b) \in A \times B$ such that $ab=1$, we have that for any pair $(a',b') \in A \times B$,
\begin{align*}
    1 &= ab \\
    & = (a + b' - b')(b + a' - a')\\
    & = (Y - c)(X - d)
\end{align*}
which is an incidence between the point set $P = (A+B)^2$ and the $N^2$ hyperbolas $H$ given by $(Y-b')(X-a') = 1$ as above. Applying Theorem \ref{asymmetrichyperbola}, we find
$$r_{AB}(1) N^2 \leq I(P,H) \ll |A+B|^{6/5}N^{17/10} + |A+B|^{1/2}N^2.$$

If the second term dominates, we have that $r_{AB}(1) \ll K^{1/2}N^{1/2}$, which is better than claimed. We may therefore assume that the leading term dominates, giving
$$r_{AB}(1) \ll K^{6/5}N^{9/10}$$
as needed.
\end{proof}

Secondly, we prove Corollary~\ref{cor:Becks}.
\begin{proof}[Proof of Corollary \ref{cor:Becks}]

Given our point set $P$ of size $n$, we will consider all the M\"obius transformations defined by $P$, where we recall that a M\"obius transformation is said to be defined by $P$ if it passes three points of $P$. For each $k \geq 0$, we define a transformation $f$ to be $2^k$-rich if \emph{between} $2^k$ and $2^{k+1}-1$ points of $P$ lie on the transform (note that this is different to the definition used previously). Specifically, $f$ is $2^k$-rich if $2^k \leq |f \cap P| < 2^{k+1}$. The outline of the proof will be to define a central range of rich but not too rich transforms, and show that whilst they cover a large number of the points, they miss a positive proportion of them. We then have two cases: either we have a positive proportion of $P$ lying on richer transforms, which will lead to the first conclusion, or on less rich transforms, in which we must have many transforms to support all of the points of $P$. Having defined $2^k$-rich transforms, we note that by Theorem~\ref{main}, we must have only $$O\left(\frac{n^{\frac{15}{4}}}{2^{\frac{19j}{4}}}+\frac{n^2}{2^{2j}}\right)$$ such $2^k$-rich transforms. Each of these $2^k$-rich transforms contain $\Omega(2^{3j})$ triples of points of $P$, and so at most $$O\left(\frac{n^{\frac{15}{4}}}{2^{{\frac{7j}{4}}}}+n^2 2^{j}\right)$$ triples of points are on a $2^j$-rich transform. Let $C$ be a large constant, and consider the transforms which are $2^j$-rich for 
$$Cn^{3/7}\leq 2^{j}\leq n/C^{7/4}.$$
Summing the number of triples of points on all such transforms, we have at most $O\left(\frac{n^{3}}{C^{{7/4}}}\right)$ triples of points on this collection of rich but not too rich transforms. This can be compared to the total number of all triples of points $\frac{n(n-1)(n-2)}{6}$ to see that by taking $C$ sufficiently large, a positive proportion of triples do not lie on a rich transform in our above defined collection. This means that a positive proportion of triples lie on transforms which are either richer than our collection, or poorer. This will lead to the two cases in our corollary.

Explicitly, either a positive proportion of triples lie on transforms with less than $2Cn^{3/7}$ points, or more than $\frac{n}{C^{7/4}}$ points. If we are in the latter case, we are done as this is the first conclusion. We will then assume we are in the former case, and none of these transforms have more than $2Cn^{3/7}$ points.

Let $T$ be this set of poor transformations. Counting the number of triples on these poor transforms gives
$$n^{9/7}|T| \gg \sum_{f \in T} |f \cap P|^3 \gg n^3 \implies |T| \gg n^{12/7}$$
proving the second case of the corollary.
\end{proof}

Next, we prove Corollary \ref{expander}.

\begin{proof}[Proof of Corollary \ref{expander}]
Recall that we are considering the set
$$ Q := \left\{ a + \frac{1}{b-c} : a,b,c \in A \right\}.$$
Consider the point set $Q \times A$, and the set of hyperbolas
$$H := \{(y-a)(x-c) = 1:a,b \in A\}.$$
The main observation is that for all $a,b,c \in A$, we have
$$q = a + \frac{1}{b-c} \iff (b-c)(q-a) = 1$$
that is, the point $(q,b)$ lies on the hyperbola $(y-c)(x-b) = 1$. This shows that $I(Q \times A,H) \geq |A|^3$. Applying the upper bound given by Theorem \ref{asymmetrichyperbola}, we have
$$|A|^3 \leq I(Q \times A, H) \ll |Q|^{1/2}|A|^{12/5} + |A|^{5/2}.$$
Discarding the error term and rearranging yields the result
$$|Q| \gg |A|^{6/5}.$$

The second expander follows in a similar way, by considering the set of transformations $T$ given by
$$y = \frac{ax + c}{x + d}$$
with $a,c,d \in A$, $ad - c \neq 0$, and the point set $A \times Q$, where we define
$$Q := \left\{ \frac{ab+c}{b+d} : a,b,c,d \in A \right\}.$$
Note that we have $|T| \gg |A|^3$, since there are only at most $|A|^2$ bad triples $(a,c,d)$ with $ad-c = 0$. We can upper and lower bound the incidences between these points and transforms by Theorem \ref{asymmetric}, giving
$$|A|^4 \leq I(A \times Q,T) \ll |A|^{16/5} |Q|^{3/5} + |A|^{9/5}|Q|^{7/5} + |A|^3.$$
The third term can be discarded, and the second term leads to a result better than claimed. Given that the leading term dominates, rearranging gives the result as needed.
\end{proof}
Finally, we prove Corollary \ref{similarity}.
\begin{proof}[Proof of Corollary \ref{similarity}]
Recall that we are given two sets $A,S \subseteq \F_p$, which should be thought of as the `large set' and the `pattern set' respectively. We are aiming to upper bound the number of subsets $A' \subseteq A$ which are projectively equivalent to $S$. In other words, we are looking for the number of projective transformations $f$ such that $f(S) \subseteq A$. Let $T$ be the set of such transformations. Such projective transformations take the form of a $2 \times 2$ matrix with non-zero determinant, up to scalar multiplication. In other words, they are given by M\"{o}bius transformations.

For context, each triple of points in the plane $\F_p^2$ has at most one M\"{o}bius transformation passing through all three points. The result of this is that upon sending three points of $S$ to three points to $A$, the M\"{o}bius transformation is determined, giving a trivial upper bound of $|T| \ll |A|^3$.

Suppose we have a transformation $f \in T$. Since we must have $f(S) \subseteq A$, when we consider the point set $S \times A$, $f$ must be $|S|$ rich with respect to this point set. Using the upper bound for $k$-rich transformations from Theorem \ref{asymmetric}, we have
$$|T| \ll \frac{|S|^4|A|^3}{|S|^5} + \frac{|S|^2|A|^2}{|S|^2} = \frac{|A|^3}{|S|}  + |A|^2.$$
Finally, we note that we must have $|S| \leq |A|$, so that the error term may be ignored.
\end{proof}

\section*{Acknowledgements}
The first author was supported by Austrian Science Fund FWF grant P-34180. We thank Oliver Roche-Newton, Misha Rudnev, and Sophie Stevens for helpful suggestions and conversations.

\end{document}